\title[Linear Convergence of Single-loop Algorithm for Bilevel Optimization]{Linear Convergence Analysis of Single-loop Algorithm for Bilevel Optimization via Small-gain Theorem}
\newtheorem{theorem}{Theorem}
\newtheorem{lemma}{Lemma}
\newtheorem{remark}{Remark}
\newtheorem{assumption}{Assumption}
\author{%
 \Name{Jianhui Li} \Email{jianhuili@zju.edu.cn}\\
 \addr College of Control Science and Engineering, Zhejiang University, P. R. China\\
  \addr School of Data Science, The Chinese University of Hong Kong, Shenzhen, P. R. China
 \AND
 \Name{Shi Pu} \Email{pushi@cuhk.edu.cn}\\
 \addr School of Data Science, The Chinese University of Hong Kong, Shenzhen, P. R. China
 \AND
 \Name{Jianqi Chen} \Email{jqchen@nju.edu.cn}\\
 \addr Center for Advanced Control and Smart Operations (CACSO), Nanjing University, P. R. China
 \AND
 \Name{Junfeng Wu} \Email{junfengwu@cuhk.edu.cn}\\
 \addr School of Data Science, The Chinese University of Hong Kong, Shenzhen, P. R. China
}
\begin{document}

\maketitle

\begin{abstract}%
Bilevel optimization has gained considerable attention due to its broad applicability across various fields. While several studies have investigated the convergence rates in the strongly-convex-strongly-convex (SC-SC) setting, no prior work has proven that a single-loop algorithm can achieve linear convergence. This paper employs a small-gain theorem in {robust control theory} to demonstrate that {a} single-loop algorithm based on the implicit function theorem attains a linear convergence rate of $\mathcal{O}(\rho^{k})$, where $\rho\in(0,1)$ is specified in Theorem~\ref{theorem:final_convergence}.  {Specifically, We model the algorithm as a dynamical system by identifying its two interconnected components: the controller (the gradient or approximate gradient functions) and the plant (the update rule of variables)}. We prove that each component exhibits a bounded gain {and that}, with carefully designed step sizes,  {their cascade accommodates a product gain} strictly less than {one}. Consequently,  {the overall algorithm can be proven to achieve a linear convergence rate, as guaranteed by the small-gain theorem}. The gradient boundedness assumption adopted in the single-loop algorithm (\cite{hong2023two, chen2022single}) is replaced with a gradient Lipschitz assumption in Assumption \ref{assumption_upper}.\ref{assumption_upper_smooth}. To the best of our knowledge, this work is first-known result on linear convergence for  {a} single-loop algorithm.
\end{abstract}

\begin{keywords}%
Bilevel optimization, convergence rate, computational complexity
\end{keywords}

\section{Introduction}
\label{sec:introduction}
Bilevel optimization has witnessed an up-soaring in recent years due to its broad applicability to various {domains}, such as adversarial training (\cite{huang2020metapoison, robey2023adversarial}), hyper-parameter tuning (\cite{franceschi2018bilevel, sinha2020gradient, okuno2021lp}), {and} model pruning (\cite{zhang2022advancing, sehwag2020hydra}). This class of optimization {problems} is characterized by two nested levels, namely an upper and a lower level, where the decision variables are {mutually constrained}. {Specifically,} a bilevel optimization problem can be formalized as follows:
\begin{equation}
    \label{equ:BLO-problem}
    \begin{aligned}
        \text{minimize}_{\omega} \quad & f(\omega, v)&\textit{(upper-level problem)}\\
        \text{s.t.} \quad & v \in \mathop{\arg\min}_{v} g(\omega,v) \quad & \textit{(lower-level problem),}
    \end{aligned}
\end{equation}
where $f$ and $g$ are continuously differentiable functions, and $\omega\in\mathbb{R}^m, v\in\mathbb{R}^n$ are the upper- and lower-level variables, respectively. \par

\begin{table}[t]
    \centering
    \caption{Summary of convergence rates of bilevel optimization algorithms. {``Design'' refers} to whether the algorithm adopts a single-loop or double-loop structure. We use the abbreviations SC-SC for strongly-convex-strongly-convex  and NC-SC for nonconvex-strongly-convex conditions.  For the specific {values of} $\rho_1$ and $\rho$, readers are referred to corresponding papers and Theorem \ref{theorem:final_convergence} for details. }
    \begin{threeparttable}[t]
    \begin{tabular}{cccccc}
        \hline
        { Algorithm} & Condition & Design & Step sizes (upper, lower) & Rate \\\hline
        \begin{tabular}{c}
             BA  \\ (\cite{ghadimi2018approximation})
        \end{tabular} & \begin{tabular}{c}
             deterministic  \\ SC-SC
        \end{tabular} & double  & $\mathcal{O}(1)$, $\mathcal{O}(1)$ & $\mathcal{O}(\rho_1^{k})$ \tnote{(a)}\\
        \begin{tabular}{c}
             stocBIO  \\ (\cite{ji2021bilevel})
        \end{tabular} & \begin{tabular}{c}
             stochastic  \\ NC-SC
        \end{tabular}& double & $\mathcal{O}(1)$, $\mathcal{O}(1)$ & $\mathcal{O}(k^{-1})$ \tnote{(b)}\\
        \begin{tabular}{c}
             STABLE  \\ (\cite{chen2022single})
        \end{tabular} & \begin{tabular}{c}
             stochastic  \\
             SC-SC
        \end{tabular} & single & $\mathcal{O}(k^{-1/2}) $, $\mathcal{O}(k^{-1/2}) $ & $\mathcal{O}(k^{-1})$ \\
        \begin{tabular}{c}
             TTSA  \\ (\cite{hong2023two})
        \end{tabular} & \begin{tabular}{c}
             stochastic\\ SC-SC
        \end{tabular} & single & $\mathcal{O}(k^{-1})$, $\mathcal{O}(k^{-2/3})$ & $\mathcal{O}(k^{-2/3})$ \\\hline
        \begin{tabular}{c}
             \textbf{Proposed method}  \\ (\textbf{Theorem} \ref{theorem:final_convergence})
        \end{tabular} & \begin{tabular}{c}
             \textbf{deterministic}  \\ \textbf{SC-SC}
        \end{tabular} & \bf{single} & $\bm{\mathcal{O}(1)}$, $\bm{\mathcal{O}(1)}$ & $\bm{\mathcal{O}(\rho^{k})}$ \\\hline
    \end{tabular}
    \begin{tablenotes}
        \item[(a)] {The linear rate solely considers the update iterations of the upper-level, overlooking the gradually increasing number of lower-level gradient evaluations that might be involved in each evaluation of the upper-level gradient. }
        \item[(b)] In the nonconvex setting, {the} analysis establishes convergence towards the stationary point.
    \end{tablenotes}
    \end{threeparttable}
    \label{tab:convergence_rate}
\end{table}

To tackle the problem \eqref{equ:BLO-problem}, a double-loop iterative algorithm is first adopted in \cite{ghadimi2018approximation}, {where} {the} inner loop {searches} the minimizer of $g(\omega, \cdot)$ given fixed $\omega$, and the outer loop minimizes the upper-level objective function. Variants of double-loop algorithms can be found in \cite{ji2021bilevel, arbel2021amortized} and their applications in \cite{shou2020reward, franceschi2017forward}. However, this approach is considered as {computationally} inefficient since {the lower-level solution becomes outdated with every update of $\omega$}, requiring repeated recalculations due to the shifting lower-level problem.
 \par

In contrast, single-loop algorithms (\cite{li2022fully, chen2022single}) update $\omega$ and $v$ concurrently. Since the gradient of the upper-level objective function ideally depends on the exact solution of the lower-level problem, {these algorithms} need to replace the exact gradient with an estimate that incorporates the current iterate $v_k$ of the lower-level variable $v$ to break the dependency. 
Compared to double-loop counterparts, single-loop algorithms are more computationally efficient and simpler to implement. Nonetheless, establishing their convergence and convergence rate is more challenging due to the use of an approximated upper-level gradient. Numerous studies have explored the convergence of single-loop algorithms. For instance, \cite{ji2021bilevel} proves a sublinear convergence rate $\mathcal{O}(k^{-1})$ for the implicit differentiation (AID) and iterative differentiation (ITD) methods, which computes the approximated gradient using either a single update or iterative updates per iteration. The papers \cite{chen2021closing, khanduri2021near} further incorporate stochastic approximation techniques. Due to the computational burden of {calculating} the inverse of the Hessian matrix in gradient approximation, \cite{li2022fully} proposes a Hessian inverse-free algorithm. The paper \cite{liu2022inducing} further applies the algorithm to incentive design problem and proves a sublinear convergence rate. The paper \cite{hong2023two} analyzes the convergence rate under the SC-SC setting. They propose a two-timescale stochastic approximation (TTSA) algorithm, where the upper- and lower-level step sizes decay at different rates, and demonstrate its convergence rate of $\mathcal{O}(k^{-2/3})$. However, they assume a bounded upper-level gradient which may conflict with the strong convexity condition on the upper-level function. Additionally, \cite{liang2023lower} derives an optimistic complexity bound for linear convergence under SC-SC, meaning that if the algorithm can achieve linear convergence, the rate cannot exceed its established limit. Their work also establishes linear convergence for a double-loop algorithm, wherein the update frequency of the inner loop is adaptively determined based on the targeted error of the loss function. To summarize, various single-loop algorithms have been developed and analyzed under different settings (such as convex-strongly-convex or stochastic settings). \par

In this paper, robust control techniques are employed to establish linear convergence of the order $\mathcal{O}(\rho^k)$, for some $\rho\in(0,1)$ specified in Theorem \ref{theorem:final_convergence}. We model the update process of the single-loop method as a dynamical system, with the gradients as inputs and the upper- and lower-level variables as system states. Inspired by the approach in \cite{hu2017control}, we apply a small-gain theorem to {examine} the input-output stability of a modified rendition of the modeled dynamical system. Several studies, including \cite{hu2017control, hu2017dissipativity,lessard2016analysis}, also {utilize} robust control {techniques} (such as integral quadratic constraints and the small-gain theorem) to derive convergence for optimization algorithms. However, these works primarily focus on single-level {optimization}, where the convergence result is {comparatively} more well-known to the community. \par


\textbf{Main contributions}: First, we establish a linear convergence rate for a single-loop algorithm under the SC-SC condition, as formalized in Theorem \ref{theorem:final_convergence}. To the best of our knowledge, this presents the first result that demonstrates linear convergence for the single-loop algorithm. Prior studies (\cite{hong2023two,chen2022single}) predominantly achieve sublinear rates in stochastic gradient setting. 
These studies typically rely on the assumption of a bounded upper-level gradient (i.e., $\|\nabla f_*(\omega)\|$ is bounded; see Remark~\ref{remark1} for a detailed discussion.). This restrictive assumption prevents the derivation of linear convergence rates, even in deterministic setting.
Second, in contrast to the bounded upper-level gradient assumption, we adopt a gradient Lipschitz condition (Assumption \ref{assumption_upper}.\ref{assumption_upper_smooth}). It aligns the theoretical derivations more closely with practical scenarios, as the bounded upper-level gradient assumption may conflict with the SC-SC condition.

\textbf{Notation}: Unless otherwise specified, lowercase letters (e.g., $\omega, v$) denote vectors, while uppercase letters ($M$) denote matrices. The gradient of {the twice continuously differentiable} function $g(\omega, \cdot)$ with respect to $v$ is denoted by $\nabla_v g(\omega, v)$, and the Hessian matrix with respect to $v$ is denoted by $\nabla_{vv}^2g(\omega, v)$. We denote $\nabla_{\omega v}^2g(\omega, v):=\frac{\partial}{\partial \omega} \frac{\partial}{\partial v} g(\omega, v)$.
 Let $\Vert v\Vert_2$ denote the standard Euclidean norm of a vector $v$. 
Let $\ell_{2e}^p $ denote the set of all one-sided sequences $\{x_0,x_1,\ldots\}$ with each $x_k\in\mathbb R^p$, and let $\ell_2^p\subseteq \ell_{2e}^p$ denote the set of square-summable sequences, i.e., those with  bounded $\ell_2$ norm $\|x\|:=\sum_{k=0}^{\infty}\|x_k\|_2^2$. The superscript $p$ will be skipped if it is clear from the context.  The gain of a causal  bounded operator $K:\ell_{2e}\to \ell_{2e}$ is induced by $\ell_{2}$ signals, defined as 
$\|K\|=\sup_{x\in \ell _2,x\not=0}\frac{\|Kx\|}{\|x\|}$. 
In particular, when $K$ is a linear time-invariant system, the $\mathcal H_\infty$ norm of the transfer function $\hat K(z)$ of $K$ is 
defined as $\|\hat K(z)\|_{\infty}:={\rm ess}\sup_{\omega \in [-\pi,\pi]} \sigma_{\max}(\hat K({\mathrm{e}^{\mathrm{\mathbf{j}}\omega}}))$, which evaluates the highest singular value of $\hat K(\mathrm{e}^{\mathrm{\mathbf{j}} \omega})$ over the frequency range $[-\pi,\pi]$, and has
$\|\hat K(z)\|_{\infty}=\|K\|$.

\section{Preliminary}
\label{sec:preliminary}
\subsection{The Single-loop Algorithm}
\label{subsec:single_loop_algorithm}
We are concerned about the single-loop algorithm as studied in \cite{hong2023two, chen2022single}, which updates $\omega, v$ simultaneously. 
The underlying {intuition} is as follows.
When $g(\omega, \cdot)$ is strongly-convex, this function has a unique minimizer for any $\omega$, denoted as $v_*(\omega)$. The upper-level objective function can then be written as $f_*(\omega):=f(\omega, v_*(\omega))$. Therefore, its gradient can be computed as
\begin{equation*}
    \nabla_\omega f_*(\omega) = \nabla_\omega f(\omega, v_*(\omega)) +   \nabla_\omega v_*(\omega)\nabla_v f(\omega, v_*(\omega)).
\end{equation*}
Note that the gradient typically requires access to $v_*(\omega)$, which is in general not available unless the lower-level problem admits a closed-form solution. {To evaluate $\nabla_\omega v_*(\omega)$,} by differentiating both sides of the first-order optimality condition $\nabla_v g(\omega, v_*(\omega))=0$ with respect to $\omega$, we have 
\begin{equation*}
    \nabla_\omega v_*(\omega)\nabla_{vv}^2 g(\omega, v_*(\omega)) + \nabla_{\omega v}^2 g(\omega, v_*(\omega)) = 0.
\end{equation*}
With the strong convexity assumption,  $g(\omega, \cdot)$ admits an invertible Hessian {matrix}. In virtue of the implicit function theorem (\cite{ghadimi2018approximation}), we have 
\begin{equation*}
    \nabla_\omega f_*(\omega)=
    \nabla_\omega f(\omega, v_*(\omega)) -\nabla_{\omega v}^2 g(\omega, v_*(\omega))[\nabla_{vv}^2 g(\omega, v_*(\omega))]^{-1} \nabla_v f(\omega, v_*(\omega)).
\end{equation*}
In order to devise an iterative update, we replace $v_*(w)$ in the above formula with an arbitrary $v$ and define the so-called approximated gradient as follows:
\begin{equation*}
    \tilde{\nabla}f(\omega, v) := \nabla_\omega f(\omega, v) - \nabla_{\omega v}^2g(\omega, v)[\nabla_{vv}^2g(\omega, v)]^{-1} \nabla_v f(\omega, v).
\end{equation*}
Finally, the update rule of $\omega$ and $v$ can be formalized as 
\begin{equation}\label{equ:update_dynamics}
    \begin{aligned}
        \omega_{k+1} =  \omega_k - \alpha \Tilde{\nabla}f(\omega_k, v_k),\\
        v_{k+1} = v_k - \beta \nabla_v g(\omega_k, v_k),
    \end{aligned}
\end{equation}
where $\alpha, \beta$ are step sizes. \par


\subsection{Assumptions}
\label{subsec:assumptions}
In this subsection, {we characterize the assumptions necessary for the convergence rate analysis of the algorithm (\ref{equ:update_dynamics}).}
\begin{assumption}\label{assumption_lower}
    The lower-level objective function satisfies
    \begin{enumerate}
        \item \label{assumption_lower_strongly} Function $g(\omega, \cdot)$ is $\mu_g$-strongly convex, i.e., $\langle \nabla_v g(\omega, v), v'-v \rangle \leq$ $ -\frac{\mu_g}{2}\Vert v-v' \Vert_2^2$ holds for any $\omega, v, v'$;
        \item \label{assumption_lower_smooth} Function $g(\omega, \cdot)$ is $L_g$-smooth, i.e., $\Vert \nabla_v g(\omega, v) - \nabla_v g(\omega, v') \Vert_2 \leq L_g \Vert v-v' \Vert_2$ holds for all $\omega, v, v'$.
    \end{enumerate}
\end{assumption}
\begin{assumption}\label{assumption_upper}
The upper-level objective function satisfies
\begin{enumerate}
    \item \label{assumption_upper_strongly} Function $f_*(\cdot)$ is $\mu_f$-strongly convex, i.e., $\langle \nabla_\omega f_*(\omega), \omega'-\omega \rangle \leq -\frac{\mu_f}{2}\Vert \omega - \omega' \Vert_2^2$ holds for all $\omega, \omega'$;
    \item \label{assumption_upper_smooth} {The approximated gradient $ \tilde{\nabla}f(\omega, v)$ is $H_\omega$-Lipschitz with respect to $\omega$, and $H_v$-Lipschitz with respect to $v$, i.e., for all $\omega,\omega' v, v'$,}
    \begin{equation*}
        \begin{aligned}
            \Vert \tilde{\nabla}f(\omega, v) - \tilde{\nabla}f(\omega',v)\Vert&\leq H_\omega \Vert \omega - \omega' \Vert_2, \\
            \Vert \tilde{\nabla}f(\omega, v) - \tilde{\nabla}f(\omega, v') \Vert & \leq H_v \Vert v- v' \Vert_2.
        \end{aligned}
    \end{equation*}
\end{enumerate}
\end{assumption}
Assumptions \ref{assumption_lower} and \ref{assumption_upper}.\ref{assumption_upper_strongly} are 
customary for analysis of bilevel optimization with SC-SC setting, used in 
\cite{liu2022inducing,hong2023two,ghadimi2018approximation}.
The $H_\omega$-Lipschitz property in Assumption~\ref{assumption_upper}.\ref{assumption_upper_smooth} is used in \cite{liu2022inducing} and can be derived from Assumptions 2.1 and 2.2  in \cite{hong2023two} or from the identical assumptions {(Assumption 1 and 2)} in~\cite{ghadimi2018approximation}. 
\begin{remark}\label{remark1}
Prior studies (\cite{chen2022single, hong2023two}) commonly assume a uniformly bounded upper-level gradient $\nabla_\omega f_*(\omega)$. However, such an assumption is overly restrictive and may potentially conflict with the strong convexity of $\nabla_\omega f_*(\omega)$ (Assumption~\ref{assumption_upper}.\ref{assumption_upper_strongly}).
In contrast, we substitute this assumption with $H_v$-Lipschitz property in
Assumption \ref{assumption_upper}.\ref{assumption_upper_smooth} in the derivation of our main result, which 
is less restrictive in the
sense that it better aligns with real-world scenarios. 
\end{remark}

We further assume that the Hessian $\nabla_{\omega v}^2g(\omega, v)$ is bounded.
\begin{assumption}\label{assumption_continuous_v*}
    There exists $H>0$ such that, for all $\omega$, $v$, $\Vert \nabla_{\omega v}^2g(\omega, v) \Vert_2 < H$.
\end{assumption}
Given Assumption \ref{assumption_lower}.\ref{assumption_lower_strongly} that $g(\omega, \cdot)$ is $\mu_g$-strongly convex, we have $\Vert [ \nabla_{vv}^2g(\omega, v) ]^{-1} \Vert_2\leq 1/\mu_g$. Consequently, the lower-level solution $v_*(\cdot)$ is Lipschitz continuous, proved by following lemma. 
\begin{lemma}[Lemma B.3 in \cite{liu2022inducing}]
    \label{lemma:v*_lipschitz} For problem \eqref{equ:BLO-problem}, under Assumptions \ref{assumption_lower}.\ref{assumption_lower_strongly} and \ref{assumption_continuous_v*}, we have
$ \Vert v_*(\omega) - v_*(\omega') \Vert_2 \leq \frac{H}{\mu_g} \Vert \omega-\omega' \Vert_2$.
\end{lemma}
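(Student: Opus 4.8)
The plan is to exploit the fact that, since $g(\omega,\cdot)$ is $\mu_g$-strongly convex, the minimizer $v_*(\omega)$ is characterized by the stationarity condition $\nabla_v g(\omega, v_*(\omega)) = 0$, and to differentiate this identity to obtain a closed form for the Jacobian of $v_*$. This is exactly the computation already carried out in the derivation preceding \eqref{equ:update_dynamics}: differentiating $\nabla_v g(\omega, v_*(\omega))=0$ in $\omega$ yields $\nabla_\omega v_*(\omega)\nabla_{vv}^2 g(\omega, v_*(\omega)) + \nabla_{\omega v}^2 g(\omega, v_*(\omega)) = 0$, and since the Hessian is invertible we get $\nabla_\omega v_*(\omega) = -\nabla_{\omega v}^2 g(\omega, v_*(\omega))\,[\nabla_{vv}^2 g(\omega, v_*(\omega))]^{-1}$. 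The implicit function theorem (as in \cite{ghadimi2018approximation}) guarantees that this differentiation is legitimate and that $v_*(\cdot)$ is continuously differentiable, because $g$ is twice continuously differentiable and $\nabla_{vv}^2 g$ is nonsingular with $\|[\nabla_{vv}^2 g]^{-1}\|_2 \le 1/\mu_g$.

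First I would bound the operator norm of the Jacobian. Using submultiplicativity of the spectral norm together with Assumption \ref{assumption_continuous_v*} ($\|\nabla_{\omega v}^2 g\|_2 < H$) and the Hessian-inverse bound $\|[\nabla_{vv}^2 g]^{-1}\|_2 \le 1/\mu_g$ stated just above the lemma, I obtain
\begin{equation*}
\|\nabla_\omega v_*(\omega)\|_2 \le \|\nabla_{\omega v}^2 g(\omega, v_*(\omega))\|_2 \, \|[\nabla_{vv}^2 g(\omega, v_*(\omega))]^{-1}\|_2 \le \frac{H}{\mu_g}
\end{equation*}
for every $\omega$. I would then upgrade this pointwise bound to a global Lipschitz estimate by integrating along the segment joining the two points. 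Fixing $\omega,\omega'$ and setting $\phi(t) := v_*(\omega' + t(\omega-\omega'))$ for $t\in[0,1]$, the fundamental theorem of calculus gives $v_*(\omega) - v_*(\omega') = \int_0^1 \nabla_\omega v_*(\omega'+t(\omega-\omega'))^\top (\omega-\omega')\,dt$, so taking Euclidean norms and pulling the norm inside the integral yields $\|v_*(\omega)-v_*(\omega')\|_2 \le \sup_{t\in[0,1]}\|\nabla_\omega v_*(\omega'+t(\omega-\omega'))\|_2\,\|\omega-\omega'\|_2 \le \frac{H}{\mu_g}\|\omega-\omega'\|_2$, which is precisely the claim.

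Main obstacle: the estimates are routine matrix-norm manipulations, so I do not anticipate a genuine difficulty; the only points requiring care are (i) justifying the differentiability of $v_*$ and the Jacobian formula via the implicit function theorem, which relies on $g\in C^2$ and the uniform invertibility of $\nabla_{vv}^2 g$ furnished by strong convexity, and (ii) keeping the transpose/orientation convention for $\nabla_\omega v_*$ consistent so that the operator-norm bound transfers correctly into the integral (the spectral norm is invariant under transpose, so the constant is unaffected). As a convention-free alternative that avoids differentiating $v_*$ altogether, one may subtract the two optimality conditions $\nabla_v g(\omega, v_*(\omega))=0$ and $\nabla_v g(\omega', v_*(\omega'))=0$, use the strong monotonicity of $\nabla_v g(\omega,\cdot)$ to lower-bound $\langle \nabla_v g(\omega,v_*(\omega))-\nabla_v g(\omega,v_*(\omega')),\, v_*(\omega)-v_*(\omega')\rangle$ by $\mu_g\|v_*(\omega)-v_*(\omega')\|_2^2$, and bound the residual $\|\nabla_v g(\omega,v_*(\omega'))-\nabla_v g(\omega',v_*(\omega'))\|_2 \le H\|\omega-\omega'\|_2$ through Assumption \ref{assumption_continuous_v*}; dividing by $\|v_*(\omega)-v_*(\omega')\|_2$ then recovers the same constant $H/\mu_g$.
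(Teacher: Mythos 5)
Your proof is correct. Note that the paper itself does not prove this lemma at all --- it simply imports it as Lemma B.3 of \cite{liu2022inducing} --- so there is no in-paper argument to match against; what you have written is a valid self-contained replacement. Of your two routes, the second (subtracting the optimality conditions $\nabla_v g(\omega,v_*(\omega))=0$ and $\nabla_v g(\omega',v_*(\omega'))=0$, lower-bounding via strong monotonicity of $\nabla_v g(\omega,\cdot)$, and upper-bounding the cross term by $H\|\omega-\omega'\|_2$ through the bounded mixed Hessian) is essentially the standard proof found in the cited literature (it goes back to Lemma 2.2 of \cite{ghadimi2018approximation}), and it is the more economical one since it needs only strong monotonicity plus the mean-value bound on $\nabla_v g$ in $\omega$, not differentiability of $v_*$ itself. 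Your primary route (implicit function theorem, the pointwise Jacobian bound $\|\nabla_\omega v_*\|_2\le H/\mu_g$, then integration along the segment) is also sound and dovetails with the derivation of $\nabla_\omega v_*$ already carried out in Section~\ref{subsec:single_loop_algorithm}; its only extra cost is the $C^2$ regularity and uniform invertibility of $\nabla^2_{vv}g$ needed to invoke the implicit function theorem, both of which the paper implicitly assumes. You correctly flag the transpose convention; since the spectral norm is transpose-invariant the constant $H/\mu_g$ is unaffected either way.
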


\section{Main Results}
\label{sec:main_results}
In this section, we derive the linear convergence rate {of the single-loop algorithm~\eqref{equ:update_dynamics} for solving the bilevel optimization under the SC-SC condition}. The update rule \eqref{equ:update_dynamics} is modeled as a dynamical system in Section \ref{subsec:dynamic_system}, with the nonlinear components (i.e., the gradients) as control inputs, and the linear components as a linear state space model.  We then compute the gain of the {nonlinear component} in Section \ref{subsec:gain_nonlinear}. In Section \ref{subsec:final_results}, by {optimizing} the gain of the linear {part}, we establish a set of feasible step sizes that enable the single-loop algorithm to converge linearly, along with a characterization of the convergence rate. 

\subsection{ The Dynamical System}
\label{subsec:dynamic_system}
We begin by modeling the iterative update \eqref{equ:update_dynamics} of variables as a dynamical system. Since
the equilibrium of~\eqref{equ:update_dynamics} is in general not at the origin, for the convenience of analysis, we introduce a coordinate shift to reshape the variable update dynamics. Specifically, we define a new state $x_k\in\mathbb R^{m+n}$ as
\begin{equation}\label{equ:state_def}
x_k:=\begin{bmatrix}
    x_{1,k} \\ x_{2,k}
\end{bmatrix}:=\begin{bmatrix}
\omega_k - \omega^*\\v_k-v_*(\omega_k)\end{bmatrix},
\end{equation}
where $\omega^*$ is the minimizer of $f_*(\cdot)$. Sequently, by introducing a nonlinear function 
$\phi(x_k)$, which is component-wise defined as  
\begin{equation}\label{equ:control_inputs}
\begin{aligned}
    \phi_{1}(x_k):=&\tilde{\nabla}f(x_{1,k}+\omega^*, x_{2,k}+v_*(\omega_k)),\\
    \phi_{2}(x_k):=&\nabla_v g(x_{1,k}+\omega^*, x_{2,k}+v_*(\omega_k)) +\frac{1}{\beta} ( v_*(x_{1,k}+\omega^*-\alpha \phi_{1}(x_k)) - v_*(x_{1,k}+\omega^*) ),
    \end{aligned}
\end{equation}
\begin{figure}[t]
    \centering
    \includegraphics[width=0.85\linewidth]{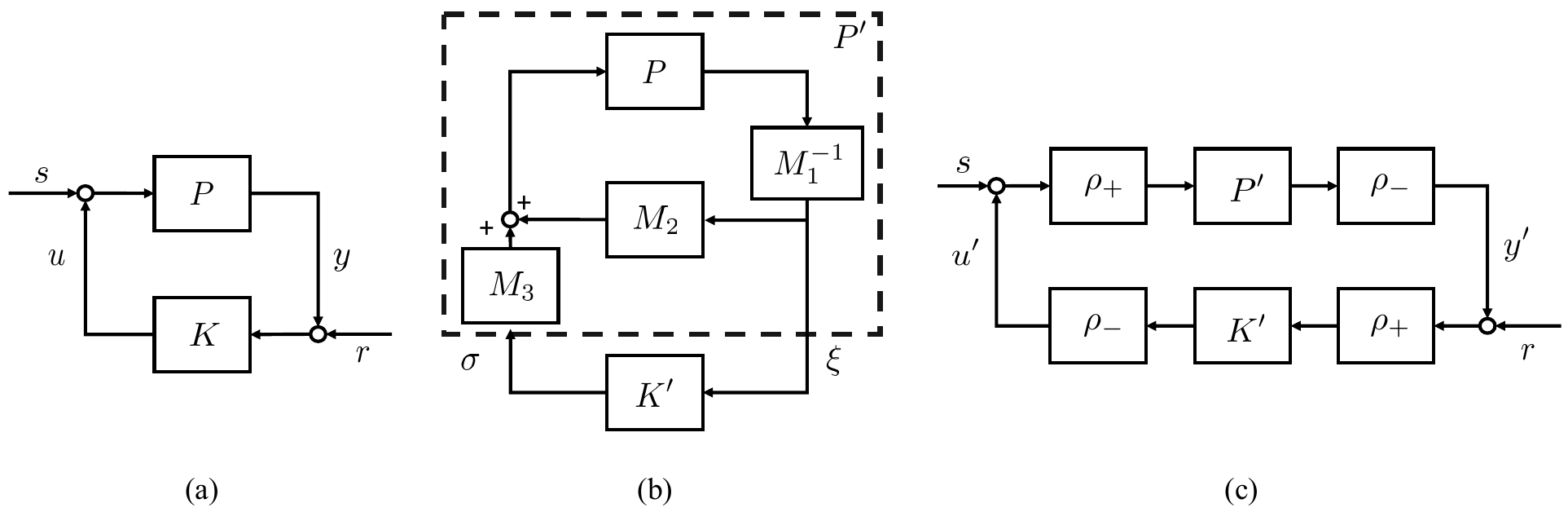}
    \vspace{-8pt}
    \caption{(a) The feedback interconnection of $P$ and $K$ with external inputs $s$ and $r$. (b) {The diagram of 
    the system after  implementing the linear transformation $M$, as specified by~\eqref{eqn:linear_transformation_M}, onto the system depicted in (a). The inputs $s$ and $r$ are absent.} (c) The feedback interconnection modified by introducing operators $\rho_-:=\rho^{-k}, \rho_+:=\rho^{k}$ for some given $\rho\in(0,1)$.   }
    \label{fig:enter-label}
    \vspace{-20pt}
\end{figure}
we obtain the following dynamics in a compact form for~\eqref{equ:update_dynamics}:
\begin{equation}\label{equ:update_process_modified}
        x_{k+1}  = x_k -  {\rm diag}(\alpha I_m, \beta I_n)\phi(x_k).
\end{equation}
Following  the conventional representation of feedback systems in control theory, it can be further cast as the interconnection $[P,K]$ of a linear system (in state-space form) {with} $P$:
\begin{equation}\label{eqn:system_P}
\begin{aligned}
x_{k+1}&=Ax_k+B u_k,\\
y_k&=Cx_k+D u_k,
\end{aligned}
\end{equation}
where $A=I_{m+n}$, $B={\rm diag}(\alpha I_m, \beta I_n)$, $C=I_{m+n}$ and $D=0$ ($x_k$, $y_k$ will be used interchangeably since they are identical.), and a memoryless nonlinear component {$K$: $u_k=\phi(y_k)$}. {See} Figure~\ref{fig:enter-label}(a) for  the system diagram 
with external inputs at rest, i.e., $s,r=0$.\par

We introduce different stability definitions of a dynamical system (\cite{dullerud2013course}) with setup specified in~\eqref{eqn:system_P}.
\begin{definition}[Internal Stability]\label{def:exp_stable}
    The system  in Figure \ref{fig:enter-label}(a)  is said to be internally stable if 
    it is well-posed\footnote{The system in Figure \ref{fig:enter-label}(a)  is well posed if unique solutions exist for  $x_{k}$,  $u_{k}$ and $y_k$, for all initial conditions  $x_{0}$ and all regular inputs $s_0$ and $r_0$.}, and for every initial conditions
    $x_{0}$ of $P$, $x_{k}\to 0$ as $k\to \infty$ with $s,r=0$.
    It is said to be exponentially stable with rate $\rho$ if there exist constants $c>0$ and $\rho\in(0,1)$ such that 
    $\|x_k\|_2\leq c\rho^k\|x_0\|_2$ with any initial state $x_0$ and $s,r=0$.
    
\end{definition}
\begin{definition}[Bounded-input-bounded-output (BIBO) Stability]\label{def:bibo_stable}
    The system  in Figure \ref{fig:enter-label}(a)  is said to be bounded-input-bounded-output stable if the closed-loop map $\begin{bmatrix}r\\s\end{bmatrix}\to \begin{bmatrix}u\\y\end{bmatrix}$ is a bounded, casual operator. 
\end{definition}
Our primary approach to certify the linear convergence of \eqref{equ:update_dynamics} is to establish a connection with the BIBO stability of a modified system. To achieve this, we introduce the operators $\rho_+$ and $\rho_-$ as two time-varying multipliers $\rho^k$ and $\rho^{-k}$, where $\rho\in(0,1)$ is a constant. By breaking the {interconnection} of $[P,K]$ and incorporating $\rho_+$ and $\rho_-$ at the break point, we derive a modified system resembling the configuration in Figure~\ref{fig:enter-label}(c). Since for a signal $u$ 
with $z$-transform $\hat u(z)$, the $z$-transforms of $\rho_+ u$ and 
$\rho_- u$ are $\hat u(z/\rho)$ and $\hat u(\rho z)$, {respectively,} the operator $\rho_-\circ P \circ \rho_+$ has $z$-transform $\hat P(\rho z)$. The following lemma formally articulates the connection.  
\begin{lemma}[Proposition 5 in \cite{boczar2015exponential}]
\label{lemma:CDC15_system_stbility_conversion}
     Suppose the system $P$ has a minimal realization. If the interconnection in Figure \ref{fig:enter-label}(c) is BIBO stable, then the system in Figure \ref{fig:enter-label}(a) is exponentially stable  with rate $\rho$. 
\end{lemma}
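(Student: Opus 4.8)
The plan is to exploit the exponential-weighting correspondence between the two loops and reduce the desired pointwise decay of the original free response to a single square-summability bound on the weighted trajectory, which BIBO stability supplies. Concretely, I would run the loop in Figure~\ref{fig:enter-label}(a) from an arbitrary initial state $x_0$ with the external inputs at rest ($s,r=0$), obtaining a well-defined trajectory $\{x_k,u_k,y_k\}$; well-posedness is immediate here since $D=0$ renders $x_{k+1}=x_k-\mathrm{diag}(\alpha I,\beta I)\phi(x_k)$ an explicit recursion. I then define the weighted signals $\bar x_k:=\rho^{-k}x_k$, $\bar u_k:=\rho^{-k}u_k$, $\bar y_k:=\rho^{-k}y_k$. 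The bridge used at the end is elementary: if $\sum_{k\ge0}\|\bar x_k\|_2^2\le\gamma^2\|x_0\|_2^2$ for a constant $\gamma$ independent of $x_0$, then bounding a single term by the whole sum gives $\rho^{-2k}\|x_k\|_2^2\le\gamma^2\|x_0\|_2^2$, i.e.\ $\|x_k\|_2\le\gamma\rho^k\|x_0\|_2$, which is exactly exponential stability with rate $\rho$ and $c=\gamma$.

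The first substantive step is the trajectory correspondence. Substituting $x_k=\rho^k\bar x_k$ and $u_k=\rho^k\bar u_k$ into \eqref{eqn:system_P} yields $\bar x_{k+1}=\rho^{-1}A\bar x_k+\rho^{-1}B\bar u_k$ and $\bar y_k=C\bar x_k+D\bar u_k$, which is precisely the state-space form with transfer function $\hat P(\rho z)$ realized by $\rho_-\circ P\circ\rho_+$, while the memoryless map $K$ becomes its weighted counterpart $\rho_-\circ K\circ\rho_+$ at the break point. Hence the weighted signals constitute a trajectory of the modified loop in Figure~\ref{fig:enter-label}(c), and this correspondence is a bijection because both loops are well-posed. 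A convenient feature of the present realization is that $C=I_{m+n}$ and $D=0$, so $\bar y_k=\bar x_k$; controlling the modified output therefore directly controls the weighted state, sidestepping the usual gap between input-output signals and internal states.

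Next I would encode the nonzero initial condition as a finite-energy external input, so that the input-output notion of BIBO stability can be invoked. This is where the minimal-realization hypothesis enters: its controllability part lets me pick a finite external control sequence that steers the rest-initialized modified system to the state $x_0$ over a finite initial window, after which zero external input reproduces exactly the weighted free response for $k\ge0$ by causality and time-invariance; this injected input lies in $\ell_2$ with norm at most $c_0\|x_0\|_2$. BIBO stability of Figure~\ref{fig:enter-label}(c) then provides a finite closed-loop gain $\gamma_{\mathrm{cl}}$ for the map $[r;s]\mapsto[u;y]$, bounding the resulting output $\bar y=\bar x$ in $\ell_2$ by $\gamma:=\gamma_{\mathrm{cl}}c_0$ times $\|x_0\|_2$, which delivers the estimate $\sum_{k\ge0}\rho^{-2k}\|x_k\|_2^2\le\gamma^2\|x_0\|_2^2$ required by the bridge above, and the conclusion follows.

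I expect the main obstacle to be exactly this initial-condition-to-input encoding: making rigorous, for the nonlinear closed loop, that the free response from $x_0$ coincides on $k\ge0$ with a forced response of the rest-initialized modified system to an $\ell_2$ external input whose energy is controlled by $\|x_0\|_2$ (handling carefully the one-sided time indexing and the use of controllability), and verifying that the weighted multipliers $\rho_\pm$ keep $K$ a bounded causal operator on the relevant signals. Everything else — the algebraic weighting identity, the reduction enabled by $C=I_{m+n}$, and the final term-by-term extraction of the pointwise bound from the square-summable sum — is routine once this reduction is in place.
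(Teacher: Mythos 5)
The paper does not prove this lemma at all: it is imported verbatim as Proposition~5 of \cite{boczar2015exponential}, so there is no in-paper proof to compare against. Your reconstruction follows the standard argument behind that cited result, and its architecture is sound: the algebraic identity showing that the $\rho^{-k}$-weighted trajectory of loop (a) satisfies the equations of loop (c) (with the plant realized by $(\rho^{-1}A,\rho^{-1}B,C,D)$, i.e.\ transfer function $\hat P(\rho z)$, and the nonlinearity replaced by $\rho_-\circ K\circ\rho_+$, whose gain bound is preserved because the sector condition is pointwise in $k$); the use of $C=I$, $D=0$ to identify output with state; and the term-by-term extraction of $\|x_k\|_2\le\gamma\rho^k\|x_0\|_2$ from the $\ell_2$ bound are all correct.

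Two points in the initial-condition-to-input encoding need repair, though both are fixable. First, you invoke ``time-invariance'' to argue that zero external input after the steering window reproduces the weighted free response; the modified loop is \emph{not} time-invariant (the multipliers $\rho^{\pm k}$ depend on absolute time), so a shift argument is invalid. What saves you is that no shift is needed: the weighted recursion $\bar x_{k+1}=\rho^{-1}A\bar x_k+\rho^{-1}B\,\rho^{-k}\phi(\rho^{k}C\bar x_k)$ and the loop-(c) recursion carry the same absolute index $k$, so they agree step by step once the states agree. Second, on a one-sided time axis you cannot steer \emph{to} $x_0$ at time $0$; you must steer the rest-initialized loop to the weighted state $\bar x_n$ at the end of the window, and during that window the nonlinearity is active, so the required external input solves an implicit finite-horizon equation whose $\ell_2$ norm must still be bounded by $c_0\|x_0\|_2$ (this uses the finite gain of $\phi$, and the fact that $\phi(0)=0$ at the shifted equilibrium). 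In the present setting this is essentially trivial: $B=\mathrm{diag}(\alpha I,\beta I)$ is invertible, so the realization is reachable in one step and the single input $s_0=\rho B^{-1}\bar x_1$ (with $\bar u_0=\phi(0)=0$ from rest) does the job, giving $\|s\|\le\|B^{-1}\|\,(1+\|B\|L)\|x_0\|_2$. With those two repairs your argument is a complete and correct proof of the lemma as used here.
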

{The small-gain theorem is used to certify the BIBO stability.}
\begin{lemma}[Small-gain theorem in \cite{desoer2009feedback}]
    {Suppose $[P,K]$ shown in Figure \ref{fig:enter-label}(a) is well-posed, and $P$ and $K$ are bounded causal operators. If $\|K\| \|P \|< 1$, then $[P,K]$ is BIBO stable. }
\end{lemma}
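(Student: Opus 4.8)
The statement is the classical small-gain theorem, so my plan is to reproduce the standard argument carried out directly in the extended space $\ell_{2e}$. The objective is to show that the closed-loop map $\begin{bmatrix} r \\ s \end{bmatrix} \mapsto \begin{bmatrix} u \\ y \end{bmatrix}$ is a bounded, causal operator, i.e., that it sends $\ell_2$ inputs to $\ell_2$ outputs with finite induced gain. Well-posedness already supplies, for every $r,s \in \ell_{2e}$, unique signals $u,y \in \ell_{2e}$ solving the loop equations $u = r + Ky$ and $y = s + Pu$ (the precise placement and signs of the external inputs are immaterial to the estimate). What remains is to produce an a priori norm bound on $(u,y)$ in terms of $(r,s)$.

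The central device is a truncation projection $\Pi_T$ onto indices $0,\dots,T$, used together with causality. Applying $\Pi_T$ to the loop equations, I would invoke the causality identity $\Pi_T K = \Pi_T K \Pi_T$ (and likewise for $P$) to write $\|\Pi_T K y\| = \|\Pi_T K \Pi_T y\| \le \|K\|\,\|\Pi_T y\|$, where the last step also uses that truncation is non-expansive. The triangle inequality then yields the coupled scalar inequalities
\[
\|\Pi_T u\| \le \|\Pi_T r\| + \|K\|\,\|\Pi_T y\|, \qquad \|\Pi_T y\| \le \|\Pi_T s\| + \|P\|\,\|\Pi_T u\|.
\]
Substituting the second into the first and collecting terms gives $(1 - \|K\|\,\|P\|)\,\|\Pi_T u\| \le \|\Pi_T r\| + \|K\|\,\|\Pi_T s\|$.

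It is precisely here that the hypothesis $\|K\|\,\|P\| < 1$ enters: it makes the coefficient $1 - \|K\|\,\|P\|$ strictly positive, so the inequality can be solved to give $\|\Pi_T u\| \le (1 - \|K\|\,\|P\|)^{-1}\bigl(\|r\| + \|K\|\,\|s\|\bigr)$, with a symmetric bound for $\|\Pi_T y\|$. The key feature is that this bound is uniform in $T$. Letting $T \to \infty$ and using $\|\Pi_T u\| \uparrow \|u\|$ then shows $u,y \in \ell_2$ with the same finite bound, which establishes boundedness of the closed-loop map; causality is inherited from that of $P$ and $K$ through the unique causal solvability granted by well-posedness. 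I expect the only delicate points to be the careful justification of the causality identity $\Pi_T K = \Pi_T K \Pi_T$ and the monotone passage to the limit $T \to \infty$, and to emphasize that without the strict bound $\|K\|\,\|P\| < 1$ the loop of inequalities cannot be closed, which is exactly where the small-gain condition is indispensable.
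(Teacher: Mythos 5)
Your argument is correct: the paper states this lemma as a cited result from \cite{desoer2009feedback} without proof, and your truncation-based derivation (using causality via $\Pi_T K = \Pi_T K \Pi_T$, the coupled norm inequalities, and the strict contraction $\|K\|\,\|P\|<1$ to close the loop uniformly in $T$) is exactly the classical argument that the citation points to. No gaps; nothing further is needed.
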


The roadmap for overall proof unfolds as follows. First we relocate the nonlinearity 
$K$ into a sector-bounded region by a linear transformation applied to 
$x_k,~u_k$ of the system 
$[P,K]$, thus determining the gain of the nonlinear mapping. Subsequently, we build the modified system as depicted in Figure \ref{fig:enter-label}(c) by incorporating the $\rho_+$ and $\rho_-$ operators. Finally, we separately compute the gain to analyze the BIBO stability of the system using the small-gain theorem. 


\subsection{Gain of Nonlinear Component}
\label{subsec:gain_nonlinear}
In this subsection, we compute the gain of the nonlinear operator $K$. We first derive the following lemmas that characterize the inequalities governing the state \eqref{equ:state_def} and control input \eqref{equ:control_inputs}.

\begin{lemma}
\label{lemma:upper_iqc}
    Under Assumption \ref{assumption_upper}, for any $k$, we have
    \begin{gather}
    \label{equ:upper_iqc_1}  
         \langle \tilde{\nabla}f(\omega_k, v_k), \omega_k - \omega^* \rangle \geq \frac{3\mu_f}{8}\Vert \omega_k-\omega^*   \Vert_2^2 - \frac{2H_v^2}{\mu_f} \Vert v_k - v_*(\omega_k) \Vert_2^2,\\
    \label{equ:upper_iqc_2}
            \Vert \tilde{\nabla}f(\omega_k, v_k) \Vert_2^2 \leq 2(H_\omega^2+2\frac{H_v^2H^2}{\mu_g^2}) \Vert \omega_k -\omega^*\Vert_2^2 + 4H_v^2 \Vert v_k-v_*(\omega_k)  \Vert_2^2.
    \end{gather}
\end{lemma}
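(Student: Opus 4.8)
The plan is to build everything on the identity $\tilde{\nabla}f(\omega, v_*(\omega)) = \nabla_\omega f_*(\omega)$ established in Section~\ref{subsec:single_loop_algorithm} (substituting the exact minimizer recovers the true upper-level gradient). In particular, since $\omega^*$ minimizes $f_*$, first-order optimality gives $\tilde{\nabla}f(\omega^*, v_*(\omega^*)) = \nabla_\omega f_*(\omega^*) = 0$, which is the anchor that lets us turn Lipschitz estimates into bounds in $\|\omega_k - \omega^*\|_2$ and $\|v_k - v_*(\omega_k)\|_2$. The only tools needed are strong convexity of $f_*$ (Assumption~\ref{assumption_upper}.\ref{assumption_upper_strongly}), the two Lipschitz estimates of Assumption~\ref{assumption_upper}.\ref{assumption_upper_smooth}, and the Lipschitz continuity of $v_*(\cdot)$ from Lemma~\ref{lemma:v*_lipschitz}.

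For \eqref{equ:upper_iqc_1}, I would split the inner product by inserting $\tilde{\nabla}f(\omega_k, v_*(\omega_k)) = \nabla_\omega f_*(\omega_k)$:
\[
\langle \tilde{\nabla}f(\omega_k, v_k), \omega_k - \omega^*\rangle = \langle \nabla_\omega f_*(\omega_k), \omega_k - \omega^*\rangle + \langle \tilde{\nabla}f(\omega_k, v_k) - \tilde{\nabla}f(\omega_k, v_*(\omega_k)), \omega_k - \omega^*\rangle.
\]
Strong convexity bounds the first term below by $\tfrac{\mu_f}{2}\|\omega_k - \omega^*\|_2^2$, while Cauchy--Schwarz together with the $H_v$-Lipschitz property bounds the second term below by $-H_v\|v_k - v_*(\omega_k)\|_2\,\|\omega_k - \omega^*\|_2$. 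Applying Young's inequality $ab \le \tfrac{\epsilon}{2}a^2 + \tfrac{1}{2\epsilon}b^2$ to this cross term with the calibrated choice $\epsilon = \mu_f/4$ leaves the coefficient of $\|\omega_k - \omega^*\|_2^2$ equal to $\tfrac{\mu_f}{2} - \tfrac{\mu_f}{8} = \tfrac{3\mu_f}{8}$ and produces the coefficient $\tfrac{H_v^2}{2\epsilon} = \tfrac{2H_v^2}{\mu_f}$ on $\|v_k - v_*(\omega_k)\|_2^2$, which is exactly \eqref{equ:upper_iqc_1}.

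For \eqref{equ:upper_iqc_2}, the key is to telescope through the intermediate point $\tilde{\nabla}f(\omega^*, v_k)$ (not the more obvious $\tilde{\nabla}f(\omega_k, v_*(\omega_k))$):
\[
\tilde{\nabla}f(\omega_k, v_k) = \underbrace{[\tilde{\nabla}f(\omega_k, v_k) - \tilde{\nabla}f(\omega^*, v_k)]}_{P} + \underbrace{[\tilde{\nabla}f(\omega^*, v_k) - \tilde{\nabla}f(\omega^*, v_*(\omega^*))]}_{Q},
\]
using $\tilde{\nabla}f(\omega^*, v_*(\omega^*)) = 0$. The $H_\omega$- and $H_v$-Lipschitz bounds give $\|P\|_2 \le H_\omega\|\omega_k - \omega^*\|_2$ and $\|Q\|_2 \le H_v\|v_k - v_*(\omega^*)\|_2$, after which $\|P+Q\|_2^2 \le 2\|P\|_2^2 + 2\|Q\|_2^2$. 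To convert $\|v_k - v_*(\omega^*)\|_2$ into the target quantity $\|v_k - v_*(\omega_k)\|_2$, I insert $v_*(\omega_k)$ and apply $\|a+b\|_2^2 \le 2\|a\|_2^2 + 2\|b\|_2^2$ once more, controlling $\|v_*(\omega_k) - v_*(\omega^*)\|_2 \le \tfrac{H}{\mu_g}\|\omega_k - \omega^*\|_2$ via Lemma~\ref{lemma:v*_lipschitz}. Collecting terms reproduces exactly the coefficient $2(H_\omega^2 + 2\tfrac{H_v^2H^2}{\mu_g^2})$ on $\|\omega_k - \omega^*\|_2^2$ and $4H_v^2$ on $\|v_k - v_*(\omega_k)\|_2^2$.

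The derivations are elementary, so the main obstacle is purely bookkeeping: choosing anchors so that the two Lipschitz modes separate cleanly into $\|\omega_k - \omega^*\|_2$ and $\|v_k - v_*(\omega_k)\|_2$, and calibrating constants. Concretely, anchoring the second bound at $\tilde{\nabla}f(\omega^*, v_k)$ is what generates the two successive factors of $2$ (one from separating $P$ and $Q$, one from splitting $v_k - v_*(\omega^*)$) that yield $4H_v^2$ and $4H_v^2H^2/\mu_g^2$; the naive anchor $\tilde{\nabla}f(\omega_k, v_*(\omega_k))$ produces a spurious cross term and the wrong distribution of constants. Likewise, the precise constants $\tfrac{3\mu_f}{8}$ and $\tfrac{2H_v^2}{\mu_f}$ in \eqref{equ:upper_iqc_1} depend on fixing $\epsilon = \mu_f/4$ in Young's inequality rather than optimizing it.
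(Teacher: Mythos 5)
Your proposal is correct and follows essentially the same route as the paper: for \eqref{equ:upper_iqc_1} the same split through $\nabla_\omega f_*(\omega_k)=\tilde{\nabla}f(\omega_k,v_*(\omega_k))$ with strong convexity, Cauchy--Schwarz, and Young's inequality at $\epsilon=\mu_f/4$; for \eqref{equ:upper_iqc_2} the same anchor $\tilde{\nabla}f(\omega^*,v_k)$ and the optimality condition $\nabla f_*(\omega^*)=0$, with your two-stage splitting (first the gradient difference, then $v_k-v_*(\omega^*)$ via Lemma~\ref{lemma:v*_lipschitz}) being an algebraically equivalent regrouping of the paper's three-term $2,4,4$ decomposition yielding identical constants.
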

\begin{proof}
 Routine calculation yields that
\begin{equation*}
    \begin{aligned}
         -\langle \tilde{\nabla}f(\omega_k, v_k), \omega_k - \omega^*\rangle
        = & \langle \nabla f_*(\omega_k), \omega^* - \omega_k \rangle+ \langle \tilde{\nabla}f(\omega_k, v_k) - \nabla f_*(\omega_k), \omega^* - \omega_k \rangle \\ 
        \leq & -\frac{\mu_f}{2}\Vert \omega_k -\omega^*    \Vert_2^2 + \frac{2}{\mu_f} \Vert \tilde{\nabla}f(\omega_k, v_k) - \nabla f_*(\omega_k)\Vert_2^2 + \frac{\mu_f}{8}\Vert \omega_k-\omega^*   \Vert_2^2 \\
        \leq & -\frac{3\mu_f}{8}\Vert \omega_k-\omega^* \Vert_2^2 + \frac{2H_v^2}{\mu_f} \Vert v_k- v_*(\omega_k)  \Vert_2^2,
    \end{aligned}
\end{equation*}
where the first inequality is due to Assumption \ref{assumption_upper}.\ref{assumption_upper_strongly} and Cauchy-Schwartz inequality, and the second inequality is due to Assumption \ref{assumption_upper}.\ref{assumption_upper_smooth}. As for \eqref{equ:upper_iqc_2}, we have
\begin{equation*}
    \begin{aligned}
        \Vert \tilde{\nabla}f(\omega_k, v_k) \Vert_2^2 \leq & 2 \Vert \tilde{\nabla}f(\omega_k, v_k) - \tilde{\nabla}f(\omega^*, v_k) \Vert_2^2 + 4\Vert \tilde{\nabla}f(\omega^*, v_k) -\tilde{\nabla}f(\omega^*, v_*(\omega_k)) \Vert_2^2 \\
        & + 4\Vert \tilde{\nabla}f(\omega^*, v_*(\omega_k)) - \nabla f_*(\omega^*)) \Vert_2^2\\
        \leq & 2(H_\omega^2+2\frac{H_v^2 H^2}{\mu_g^2}) \Vert \omega_k-\omega^* \Vert_2^2 + 4H_v^2 \Vert v_k - v_*(\omega_k) \Vert_2^2,
    \end{aligned}
\end{equation*}
where the second inequality is based on Assumption \ref{assumption_upper}.\ref{assumption_upper_smooth} and Lemma \ref{lemma:v*_lipschitz}.
\end{proof}

\begin{lemma}
    \label{lemma:lower_iqc}
    Under Assumption \ref{assumption_lower}, for any $k$, we have
    \begin{equation}
    \label{equ:lower_iqc_1}
            \langle \nabla_v g(\omega_k, v_k) + \frac{1}{\beta} (v_*(\omega_{k+1}) - v_*(\omega_k)), v_k - v_*(\omega_k) \rangle \geq  \frac{3\mu_g}{8} \Vert v_k-v_*(\omega_k) \Vert_2^2 - \frac{2\alpha^2H^2}{\beta^2 \mu_g^3} \Vert \tilde{\nabla}f(\omega_k, v_k) \Vert_2^2,
    \end{equation}
    \begin{equation}        
    \label{equ:lower_iqc_2}
             \Vert \nabla_v g(\omega_k, v_k) + \frac{1}{\beta} (v_*(\omega_{k+1}) -  v_*(\omega_k)) \Vert_2^2 \leq  2L_g^2 \Vert v_k-v_*(\omega_k) \Vert_2^2 + \frac{2\alpha^2 H^2}{\beta^2\mu_g^2}\Vert \tilde{\nabla}f(\omega_k, v_k) \Vert_2^2.
    \end{equation}
\end{lemma}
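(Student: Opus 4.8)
The plan is to mirror the structure of the proof of Lemma~\ref{lemma:upper_iqc}, establishing the two claims separately, since both reduce to standard strong-convexity/smoothness estimates once the displacement term $\tfrac{1}{\beta}(v_*(\omega_{k+1})-v_*(\omega_k))$ is tamed. The single observation that drives everything is that the update rule~\eqref{equ:update_dynamics} gives $\omega_{k+1}-\omega_k=-\alpha\tilde{\nabla}f(\omega_k,v_k)$, so Lemma~\ref{lemma:v*_lipschitz} yields the displacement estimate
\[
\Vert v_*(\omega_{k+1}) - v_*(\omega_k)\Vert_2 \leq \frac{H}{\mu_g}\Vert \omega_{k+1} - \omega_k\Vert_2 = \frac{\alpha H}{\mu_g}\Vert \tilde{\nabla}f(\omega_k, v_k)\Vert_2.
\]
This is precisely what converts every occurrence of the $v_*$-displacement into a multiple of $\Vert\tilde{\nabla}f(\omega_k,v_k)\Vert_2$, thereby producing the $\Vert\tilde{\nabla}f\Vert_2^2$ terms on the right-hand sides of both~\eqref{equ:lower_iqc_1} and~\eqref{equ:lower_iqc_2}.

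For~\eqref{equ:lower_iqc_1}, I would first split the inner product additively as $\langle \nabla_v g(\omega_k, v_k), v_k - v_*(\omega_k)\rangle + \tfrac{1}{\beta}\langle v_*(\omega_{k+1}) - v_*(\omega_k), v_k - v_*(\omega_k)\rangle$. The leading term is bounded below by $\tfrac{\mu_g}{2}\Vert v_k - v_*(\omega_k)\Vert_2^2$ by applying the $\mu_g$-strong convexity of $g(\omega_k,\cdot)$ (Assumption~\ref{assumption_lower}.\ref{assumption_lower_strongly}) with comparison point $v_*(\omega_k)$. To the cross term I would apply Cauchy--Schwarz followed by Young's inequality, calibrating the weight (namely $\mu_g/4$) so that the negative multiple of $\Vert v_k - v_*(\omega_k)\Vert_2^2$ it produces is exactly $\tfrac{\mu_g}{8}$; substituting the displacement estimate into the remaining term then yields precisely the coefficient $\tfrac{2\alpha^2 H^2}{\beta^2\mu_g^3}$ on $\Vert\tilde{\nabla}f\Vert_2^2$. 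Combining, $\tfrac{\mu_g}{2}-\tfrac{\mu_g}{8}=\tfrac{3\mu_g}{8}$ supplies the stated leading constant.

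For~\eqref{equ:lower_iqc_2}, I would use $\Vert a + b\Vert_2^2 \leq 2\Vert a\Vert_2^2 + 2\Vert b\Vert_2^2$ to separate the two summands. The first is controlled by $\Vert \nabla_v g(\omega_k, v_k)\Vert_2 = \Vert \nabla_v g(\omega_k, v_k) - \nabla_v g(\omega_k, v_*(\omega_k))\Vert_2 \leq L_g\Vert v_k - v_*(\omega_k)\Vert_2$, using $L_g$-smoothness (Assumption~\ref{assumption_lower}.\ref{assumption_lower_smooth}) together with the first-order optimality $\nabla_v g(\omega_k, v_*(\omega_k))=0$. The second summand $\tfrac{2}{\beta^2}\Vert v_*(\omega_{k+1}) - v_*(\omega_k)\Vert_2^2$ is handled directly by the displacement estimate, giving $\tfrac{2\alpha^2 H^2}{\beta^2\mu_g^2}\Vert\tilde{\nabla}f\Vert_2^2$. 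Adding the two pieces reproduces~\eqref{equ:lower_iqc_2} verbatim.

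The proof is essentially a routine assembly of strong convexity, smoothness, and the Lipschitz continuity of $v_*$, so there is no genuine obstacle. The only point demanding care is the bookkeeping in the cross term of~\eqref{equ:lower_iqc_1}: the Young's inequality weight must be chosen so that, after the displacement estimate is inserted, the constants align exactly with the claimed $\tfrac{3\mu_g}{8}$ and $\tfrac{2\alpha^2 H^2}{\beta^2\mu_g^3}$ rather than merely being dominated by them.
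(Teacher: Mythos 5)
Your proposal is correct and follows essentially the same route as the paper's proof: strong convexity for the leading inner product, Cauchy--Schwarz/Young with the weight tuned to lose exactly $\tfrac{\mu_g}{8}$, the bound $\Vert v_*(\omega_{k+1})-v_*(\omega_k)\Vert_2\leq \tfrac{\alpha H}{\mu_g}\Vert\tilde{\nabla}f(\omega_k,v_k)\Vert_2$ from Lemma~\ref{lemma:v*_lipschitz} and the update rule, and the $\Vert a+b\Vert_2^2\leq 2\Vert a\Vert_2^2+2\Vert b\Vert_2^2$ split with $\nabla_v g(\omega_k,v_*(\omega_k))=0$ for the second claim. All constants check out against the paper's derivation.
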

\begin{proof} Routine calculation yields that
\begin{dmath}
    \begin{aligned}
        & -\langle \nabla_v g(\omega_k, v_k) + \frac{1}{\beta}v_*(\omega_{k+1}) -\frac{1}{\beta} v_*(\omega_k) , v_k - v_*(\omega_k) \rangle \\
        \leq & -\frac{\mu_g}{2}\Vert v_k-v_*(\omega_k) \Vert_2^2 + \frac{2}{\beta^2\mu_g} \Vert v_*(\omega_{k+1})-v_*(\omega_k) \Vert_2^2 + \frac{\mu_g}{8}\Vert v_k-v_*(\omega_k) \Vert_2^2 \\
        \leq & -\frac{3\mu_g}{8}\Vert v_k-v_*(\omega_k) \Vert_2^2 + \frac{2\alpha^2 H^2}{\beta^2 \mu_g^3} \Vert \tilde{\nabla}f(\omega_k, v_k) \Vert_2^2,
    \end{aligned}
    \end{dmath}
where the first inequality is due to Assumption \ref{assumption_lower}.\ref{assumption_lower_strongly}, and the third {one} is due to Lemma \ref{lemma:v*_lipschitz} as well as (\ref{equ:update_dynamics}). As for \eqref{equ:lower_iqc_2}, we have \begin{equation*}
    \begin{aligned}
        & \Vert \nabla_v g(\omega_k, v_k) + \frac{1}{\beta}v_*(\omega_{k+1})-\frac{1}{\beta} v_*(\omega_k) \Vert_2^2 \\
        \leq & 2\Vert \nabla_v g(\omega_k, v_k) - \nabla_v g(\omega_k, v_*(\omega_k)) \Vert_2^2 + \frac{2}{\beta^2}\Vert v_*(\omega_{k+1}) - v_*(\omega_k) \Vert_2^2 \\
        \leq & 2L_g^2\Vert v_k - v_*(\omega_k) \Vert_2^2 + \frac{2\alpha^2H^2}{\beta^2\mu_g^2} \Vert \tilde{\nabla}f(\omega_k, v_k) \Vert_2^2,
    \end{aligned}
\end{equation*}
where the second {inequality is} due to Assumption \ref{assumption_lower}.\ref{assumption_lower_smooth}.
\end{proof} Next we adopt a linear transformation to $x_k$ and $u_k$ and further derive the gain of a transformed nonlinear mapping on top of Lemmas \ref{lemma:upper_iqc} and \ref{lemma:lower_iqc}. 

\begin{lemma}
\label{lemma:nonlinear_gain}
    If there exist positive scalars $\lambda_1, \lambda_2, \lambda_3, \lambda_4$, $\alpha$, and $\beta$ satisfying following conditions
\begin{align}
            \frac{3\lambda_1^2}{4\lambda_3} &> \frac{3\mu_f}{8}\lambda_1 - 2(H_\omega^2+2\frac{H_v^2H^2}{\mu_g^2}) \lambda_3 \label{equ:lemma_3_4_assume_1},\\
            \frac{\lambda_2^2}{4\lambda_4} & > -\frac{2H_v^2}{\mu_f}\lambda_1 + \frac{3\mu_g}{8}\lambda_2 - 4H_v^2\lambda_3 - 2L_g^2\lambda_4 \label{equ:lemma_3_4_assume_2},  \\
            \frac{2}{3}\lambda_3 &\geq \frac{2\alpha^2H^2}{\beta^2\mu_g^3}\lambda_2 + \frac{2\alpha^2H^2}{\beta^2\mu_g^2}\lambda_4\label{equ:lemma_3_4_assume_3},
    \end{align}
    then there exists a linear transformation $M\in\mathbb R^{(m+n)\times (m+n)}$,
    defining 
$\begin{bmatrix}
             \xi_k  \\
             \sigma_k 
        \end{bmatrix}=  M\begin{bmatrix}
             x_k  \\
             u_k 
        \end{bmatrix}$,
    such that the transformed state $\xi_k$ and input $\sigma_k$ satisfy $(\sigma_k-\xi_k)^\top (\sigma_k+\xi_k) \leq 0$.
\end{lemma}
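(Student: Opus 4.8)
The plan is to read the target inequality $(\sigma_k-\xi_k)^\top(\sigma_k+\xi_k)=\|\sigma_k\|_2^2-\|\xi_k\|_2^2\le 0$ as the assertion that a single indefinite quadratic form in the stacked vector $(x_k,u_k)$ is nonnegative, and to produce that nonnegativity as a conic combination of the four estimates in Lemmas~\ref{lemma:upper_iqc} and~\ref{lemma:lower_iqc}. First I would fix the dictionary between the control input and the nonlinearity: $u_{1,k}=\phi_1(x_k)=\tilde\nabla f(\omega_k,v_k)$ and $u_{2,k}=\phi_2(x_k)=\nabla_v g(\omega_k,v_k)+\tfrac1\beta(v_*(\omega_{k+1})-v_*(\omega_k))$, together with $x_{1,k}=\omega_k-\omega^*$ and $x_{2,k}=v_k-v_*(\omega_k)$. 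Under this identification the four bounds \eqref{equ:upper_iqc_1}, \eqref{equ:upper_iqc_2}, \eqref{equ:lower_iqc_1}, \eqref{equ:lower_iqc_2} become scalar quadratic inequalities in the six quantities $\|x_{1,k}\|_2^2,\|x_{2,k}\|_2^2,\|u_{1,k}\|_2^2,\|u_{2,k}\|_2^2,\langle u_{1,k},x_{1,k}\rangle,\langle u_{2,k},x_{2,k}\rangle$.

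Next I would multiply the two lower-bound estimates \eqref{equ:upper_iqc_1} and \eqref{equ:lower_iqc_1} by $\lambda_1,\lambda_2>0$ and the two upper-bound estimates \eqref{equ:upper_iqc_2}, \eqref{equ:lower_iqc_2} by $\lambda_3,\lambda_4>0$, then add them. The resulting inequality decouples as $Q_1+Q_2\ge 0$, where $Q_1=C_a\|x_{1,k}\|_2^2+C_p\|u_{1,k}\|_2^2+\lambda_1\langle u_{1,k},x_{1,k}\rangle$ involves only the upper-level block and $Q_2=C_b\|x_{2,k}\|_2^2-\lambda_4\|u_{2,k}\|_2^2+\lambda_2\langle u_{2,k},x_{2,k}\rangle$ only the lower-level block; here $C_a,C_b$ collect the $\|x_{1,k}\|_2^2,\|x_{2,k}\|_2^2$ coefficients and $C_p=-\lambda_3+\tfrac{2\alpha^2H^2}{\beta^2\mu_g^3}\lambda_2+\tfrac{2\alpha^2H^2}{\beta^2\mu_g^2}\lambda_4$. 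The role of \eqref{equ:lemma_3_4_assume_3} is precisely to force $C_p\le-\lambda_3/3<0$; since $\|u_{1,k}\|_2^2\ge 0$, replacing $C_p$ by this upper bound only increases $Q_1$, so writing $\tilde Q_1:=C_a\|x_{1,k}\|_2^2-\tfrac{\lambda_3}{3}\|u_{1,k}\|_2^2+\lambda_1\langle u_{1,k},x_{1,k}\rangle$ we still obtain $\tilde Q_1+Q_2\ge Q_1+Q_2\ge 0$.

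Finally I would regard $\tilde Q_1+Q_2$ as the quadratic form of a symmetric matrix that, after grouping $(x_{1,k},u_{1,k})$ and $(x_{2,k},u_{2,k})$, is block diagonal with the two $2\times2$ blocks $\tilde S_1=\bigl[\begin{smallmatrix}C_a&\lambda_1/2\\\lambda_1/2&-\lambda_3/3\end{smallmatrix}\bigr]$ and $S_2=\bigl[\begin{smallmatrix}C_b&\lambda_2/2\\\lambda_2/2&-\lambda_4\end{smallmatrix}\bigr]$ (each repeated with multiplicity $m$ and $n$). A one-line determinant computation shows that \eqref{equ:lemma_3_4_assume_1} is exactly $\det\tilde S_1<0$ and \eqref{equ:lemma_3_4_assume_2} is exactly $\det S_2<0$, so both blocks have inertia $(1,1)$ and the full matrix has inertia $(m+n,m+n,0)$. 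By Sylvester's law of inertia it factors as $M^\top\,\mathrm{diag}(I_{m+n},-I_{m+n})\,M$ for a nonsingular $M$; setting $\bigl[\begin{smallmatrix}\xi_k\\\sigma_k\end{smallmatrix}\bigr]=M\bigl[\begin{smallmatrix}x_k\\u_k\end{smallmatrix}\bigr]$ yields $\|\xi_k\|_2^2-\|\sigma_k\|_2^2=\tilde Q_1+Q_2\ge0$, i.e. $(\sigma_k-\xi_k)^\top(\sigma_k+\xi_k)\le0$.

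The main obstacle is the sign bookkeeping in the middle step. The coefficient $C_p$ of $\|u_{1,k}\|_2^2$ is not prescribed but is coupled to the lower-level multipliers $\lambda_2,\lambda_4$, and the argument only closes because \eqref{equ:lemma_3_4_assume_3} pins $C_p$ below $-\lambda_3/3$, which is exactly the value that turns \eqref{equ:lemma_3_4_assume_1} into the clean determinant condition $\det\tilde S_1<0$. Recognizing the three hypotheses as precisely the negative-determinant (inertia $(1,1)$) conditions for the two transformed blocks is the crux; one must also confirm that $M$ may be taken nonsingular, which holds because both determinants are strictly negative and hence the factored matrix is invertible.
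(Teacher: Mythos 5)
Your proof is correct and follows essentially the same route as the paper: a $\lambda$-weighted conic combination of the four inequalities from Lemmas \ref{lemma:upper_iqc} and \ref{lemma:lower_iqc}, with \eqref{equ:lemma_3_4_assume_3} pinning the $\|u_{1,k}\|_2^2$ coefficient at or below $-\lambda_3/3$ and \eqref{equ:lemma_3_4_assume_1}--\eqref{equ:lemma_3_4_assume_2} serving exactly as the negative-determinant conditions that give the resulting indefinite form inertia $(m+n,m+n,0)$. The only difference is that you obtain $M$ abstractly via Sylvester's law of inertia, whereas the paper constructs it explicitly in \eqref{eqn:linear_transformation_M} by completing the square; the explicit blocks $M_1,M_2,M_3$ are reused later in the proof of Theorem \ref{theorem:final_convergence}, but for the lemma as stated your existence argument suffices.
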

\begin{proof}
From \eqref{equ:state_def} and \eqref{equ:control_inputs}, inequalities (\ref{equ:upper_iqc_1}),(\ref{equ:lower_iqc_1}),(\ref{equ:upper_iqc_2}),(\ref{equ:lower_iqc_2}) are equivalent to the following:  
$-\langle \phi_1(x_k), x_{1,k} \rangle \leq -\frac{3\mu_f}{8}\Vert x_{1,k} \Vert_2^2 + \frac{2H_v^2}{\mu_f} \Vert x_{2,k} \Vert_2^2$, $-\langle \phi_2(x_k), x_{2,k} \rangle \leq -\frac{3\mu_g}{8}\Vert x_{2,k} \Vert_2^2 + \frac{2\alpha^2H^2}{\beta^2\mu_g^3}\Vert \phi_1(x_k) \Vert_2^2$, $\Vert \phi_1(x_k) \Vert_2^2 \leq 2(H_\omega^2 +\frac{2H_v^2H^2}{\mu_g^2})\Vert x_{1,k} \Vert_2^2  + 4H_v^2 \Vert x_{2,k} \Vert_2^2$ and $ \Vert \phi_2(x_k) \Vert_2^2  \leq 2L_g^2 \Vert x_{2,k} \Vert_2^2 + \frac{2\alpha^2H^2}{\beta^2\mu_g^2}\Vert \phi_1(x_k) \Vert_2^2$.
Suppose there exists positive scalars $\lambda_1, \lambda_2, \lambda_3, \lambda_4, \alpha, \beta$ satisfying \eqref{equ:lemma_3_4_assume_1}, \eqref{equ:lemma_3_4_assume_2}, and \eqref{equ:lemma_3_4_assume_3}, we obtain following inequality in compact form by summing the above four inequalities using weight parameters $\lambda_1, \lambda_2, \lambda_3,\lambda_4$,
\begin{equation}
\label{equ:lemma_3_4_iqc1}
 \begin{bmatrix}
         x_k  \\
         u_k
    \end{bmatrix} ^\top\begin{bmatrix}
        a I_m & & -\frac{\lambda_1}{2}I_m & \\
         & bI_n & & -\frac{\lambda_2}{2}I_n \\
         -\frac{\lambda_1}{2}I_m & & \frac{\lambda_3}{3} I_m & \\
         & -\frac{\lambda_2}{2}I_n & & \lambda_4 I_n 
    \end{bmatrix}   \begin{bmatrix}
         x_k\\
         u_k
    \end{bmatrix} := \begin{bmatrix}
         x_k  \\
         u_k
    \end{bmatrix} ^\top  N_0\begin{bmatrix}
         x_k  \\
         u_k
    \end{bmatrix} \leq 0, 
\end{equation}
where $a:=\frac{3\mu_f}{8}\lambda_1 - 2(H_\omega^2 +2 \frac{H_v^2H^2}{\mu_g^2})\lambda_3$, $b:=-\frac{2H_v^2}{\mu_f}\lambda_1 + \frac{3\mu_g}{8}\lambda_2-4H_v^2\lambda_3-2L_g^2\lambda_4$.  Let \begin{equation}\label{eqn:linear_transformation_M}
M=\left[\begin{array}{ c c : c c}
         \frac{1}{\sqrt{\frac{3\lambda_1^2}{4\lambda_3}-a}}I_m& & &  \\
         & \frac{1}{\sqrt{\frac{\lambda_2^2}{4\lambda_4}-b}}I_n & & \\\hdashline
         \frac{3\lambda_1}{2\lambda_3}\frac{1}{\sqrt{\frac{3\lambda_1^2}{4\lambda_3}-a}}I_m & & \sqrt{\frac{3}{\lambda_3}} I_n & \\
         & \frac{\lambda_2}{2\lambda_4}\frac{1}{\sqrt{\frac{\lambda_2^2}{4\lambda_4}-b}}I_n & & \frac{1}{\sqrt{\lambda_4}}I_n\end{array}
    \right]:=\begin{bmatrix}M_1 &0\\M_2 &M_3\end{bmatrix}.
    \end{equation}  
    We can verify that  $M^\top N_0 M={\rm diag}(-I_{m+n},I_{m+n})$.  Substituting $\begin{bmatrix}
        x_k\\
         u_k
    \end{bmatrix}  =  M \begin{bmatrix}
        \xi_k \\
        \sigma_k
    \end{bmatrix}$ to (\ref{equ:lemma_3_4_iqc1}) leads to 
$\sigma_k^\top \sigma_k\leq \xi_k^\top \xi_k$, which completes the proof.
\end{proof}
We remark that the conditions~\eqref{equ:lemma_3_4_assume_1}, \eqref{equ:lemma_3_4_assume_2} and~\eqref{equ:lemma_3_4_assume_3} in Lemma~\ref{lemma:nonlinear_gain} are 
not overly restrictive for ensuring the existence of $M$. At least, as long as $\frac{\lambda_1}{\lambda_3}$ and $\frac{\lambda_2}{\lambda_4}$ are sufficiently large and $\frac{\alpha}{\beta}$ is sufficiently small (Indeed, a specific threshold for $\frac{\alpha}{\beta}$ is discussed in Theorem~\ref{theorem:final_convergence}.), these conditions are satisfied. 
From the nonlinear component $K$ and the linear transformation $M$ that converts $x_k,u_k$ from $\xi_k,\sigma_k$, an equivalent nonlinearity $K'$ that directly maps $\xi_k$ to $\sigma_k$ can be outlined. The formulation is expressed as $K':\xi_k\mapsto \sigma_k:=M_3^{-1}(\phi(M_1 \xi_k)-M_2\xi_k)$, where $M_1,~M_2,~M_3$ are specified in~\eqref{eqn:linear_transformation_M}. 
Moreover, the gain of $K'$ satisfies $\Vert K'\Vert=\sup_{\xi_k\neq0}\frac{\Vert\sigma_k\Vert}{\Vert \xi_k \Vert}\leq 1$, indicated by
Lemma~\ref{lemma:nonlinear_gain}. Finally, to pair with $K'$ to close the loop, we need to construct a new linear system $P'$ from 
$P$ in tandem; see Figure~\ref{fig:enter-label}(b) for the new interconnection between $P'$ and $K'$. 




\subsection{Putting Things Together}
\label{subsec:final_results}
To prove the final convergence of the overall single-loop algorithm, we need to construct a set of parameters $\lambda_1, \lambda_2, \lambda_3, \lambda_4$ and step sizes $\alpha, \beta$ which satisfy the conditions outlined in Lemma \ref{lemma:nonlinear_gain} to ensure that $\|K'\|$ is less than 1 as well as the $H_\infty$ norm of the transformed linear state space also less than 1. 
We construct such parameters with following theorem. 

\begin{theorem}\label{theorem:final_convergence}
When the step sizes in the single-loop algorithm (\ref{equ:update_dynamics}) satisfy
\begin{equation*}
    \alpha < \min\{ \frac{\mu_f}{8(H_\omega^2 + \frac{2H_v^2H^2}{\mu_g^2})},\text{  } \frac{1}{24\mu_f} \},\text{  } \beta < \min\{ \frac{\mu_g}{8L_g^2}, \frac{1}{4\mu_g} \}, \frac{\alpha}{\beta^2} < \frac{2\mu_f\mu_g^4}{81H_v^2 H^2},
\end{equation*}
 the variables $\omega_k, v_k$ generated by  (\ref{equ:update_dynamics}) converges linearly to $\omega^*, v_*(\omega^*)$ with rate $\rho^k$, where 
 \begin{equation*}
    \rho \leq \max\left\{ \sqrt{ 1-\frac{3\mu_f\alpha}{4}(1-\frac{8(H_\omega^2+2\frac{H_v^2H^2}{\mu_g^2})\alpha}{\mu_f})}, \sqrt{1-\frac{\mu_g\beta}{2}(1-4 L_g^2\beta) } \right\},
\end{equation*}
i.e., there exists constants $c_\omega, c_v>0$ such that $\|\omega_k-\omega^*\|_2\leq c_\omega\rho^k(\|w_0-\omega^*\|_2+\| v_0 - v_*(\omega_0) \|_2)$, and $\|v_k-v_*(\omega^*)\|\leq c_v \rho^k(\|w_0-\omega^*\|_2+\| v_0 - v_*(\omega_0) \|_2)$.
\end{theorem}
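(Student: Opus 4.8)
The plan is to execute the three-step roadmap announced after the small-gain theorem. \emph{Step 1: certify $\|K'\|\le 1$.} Under the stated step-size bounds I would first exhibit explicit multipliers $\lambda_1,\lambda_2,\lambda_3,\lambda_4>0$ satisfying \eqref{equ:lemma_3_4_assume_1}--\eqref{equ:lemma_3_4_assume_3}, so that Lemma~\ref{lemma:nonlinear_gain} produces the transformation $M$ of \eqref{eqn:linear_transformation_M}. Following the remark after that lemma, I would take $\lambda_1/\lambda_3$ and $\lambda_2/\lambda_4$ large so that the diagonal surpluses $\tfrac{3\lambda_1^2}{4\lambda_3}-a$ and $\tfrac{\lambda_2^2}{4\lambda_4}-b$ in \eqref{equ:lemma_3_4_iqc1} stay positive, while \eqref{equ:lemma_3_4_assume_3} --- the only condition carrying the ratio $\alpha^2/\beta^2$ --- fixes the admissible scale of $\lambda_3$. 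Crucially, Lemma~\ref{lemma:nonlinear_gain} yields the \emph{pointwise} bound $\|\sigma_k\|_2^2\le\|\xi_k\|_2^2$, so $K'$ is contractive at every time step; this is what lets its gain survive the $\rho_\pm$ conjugation unchanged in Step 3.

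\emph{Step 2: bound the dilated plant.} With $A=I$, $B=\mathrm{diag}(\alpha I_m,\beta I_n)$, $C=I$, $D=0$, and with $M_1,M_2,M_3$ block-diagonal (each a scalar multiple of identity on the $\omega$- and $v$-blocks), the companion plant $P'$ obtained by closing $u_k=M_3^{-1}(\sigma_k-M_2x_k)$, $\xi_k=M_1x_k$ into \eqref{equ:update_process_modified} has $A'=I-BM_3^{-1}M_2$, $B'=BM_3^{-1}$, $C'=M_1$, which are again block-diagonal. Hence $\hat P'(z)$ splits into two decoupled scalar first-order channels, an upper-level one and a lower-level one. After the dilation $z\mapsto\rho z$ induced by $\rho_\pm$ (so the plant becomes $\hat P'(\rho z)$ in the sense of Lemma~\ref{lemma:CDC15_system_stbility_conversion}), each channel $c'b'/(\rho z-a')$ attains its $\mathcal H_\infty$ norm at $z=1$, equal to $c'b'/(\rho-a')$ whenever $0<a'<\rho$. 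Since $\|K'\|\le1$, the small-gain theorem reduces to the two scalar inequalities $c'b'<\rho-a'$, one per channel.

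\emph{Step 3: extract the rate.} Substituting the chosen $\lambda$'s and the definitions of $a',b',c'$, each channel condition $c'b'<\rho-a'$ is linear in $\rho$, with threshold $a'+b'$; the role of the specific multipliers is to render these thresholds in closed form. I expect $(a'+b')^2$ to equal $1-\tfrac{3\mu_f\alpha}{4}(1-\tfrac{8(H_\omega^2+2H_v^2H^2/\mu_g^2)\alpha}{\mu_f})$ for the upper channel and $1-\tfrac{\mu_g\beta}{2}(1-4L_g^2\beta)$ for the lower channel, so the feasible rate is the maximum of the two square roots in the statement. The bounds $\alpha<\mu_f/(8(H_\omega^2+2H_v^2H^2/\mu_g^2))$ and $\beta<\mu_g/(8L_g^2)$ are exactly what force the two radicands into $(0,1)$, hence $\rho\in(0,1)$, whereas $\alpha/\beta^2<2\mu_f\mu_g^4/(81H_v^2H^2)$, together with $\alpha<1/(24\mu_f)$ and $\beta<1/(4\mu_g)$, is what lets a single family of $\lambda_i$ simultaneously satisfy \eqref{equ:lemma_3_4_assume_1}--\eqref{equ:lemma_3_4_assume_3} and keep each $a'<\rho<1$. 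I expect this simultaneous reconciliation --- one choice of $\lambda_i$ that both certifies $\|K'\|\le1$ and sharpens both channel gains to the clean closed form --- to be the main obstacle, since the multipliers couple the two channels through the off-diagonal blocks $M_2$ and through the dependence of $\phi_2$ on $\phi_1$ in \eqref{equ:control_inputs}.

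\emph{Step 4: conclude.} Because $P$ admits a minimal realization and the memoryless loop with $D=0$ is well posed, BIBO stability of the dilated interconnection in Figure~\ref{fig:enter-label}(c) yields, via Lemma~\ref{lemma:CDC15_system_stbility_conversion}, exponential stability of the system in Figure~\ref{fig:enter-label}(a) at rate $\rho$, i.e. $\|x_k\|_2\le c\rho^k\|x_0\|_2$. Unpacking \eqref{equ:state_def} gives $\|\omega_k-\omega^*\|_2=\|x_{1,k}\|_2\le\|x_k\|_2$, and for the lower level the triangle inequality with Lemma~\ref{lemma:v*_lipschitz} gives $\|v_k-v_*(\omega^*)\|_2\le\|x_{2,k}\|_2+\tfrac{H}{\mu_g}\|x_{1,k}\|_2\le(1+\tfrac{H}{\mu_g})\|x_k\|_2$. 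Finally $\|x_0\|_2\le\|\omega_0-\omega^*\|_2+\|v_0-v_*(\omega_0)\|_2$ converts the decay of $\|x_k\|_2$ into the stated bounds with explicit constants $c_\omega,c_v$.
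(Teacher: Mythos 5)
Your proposal follows essentially the same route as the paper: choose multipliers $\lambda_1,\dots,\lambda_4$ (and in fact tie them to the step sizes via $\alpha=\tfrac{2\lambda_3}{3\lambda_1}$, $\beta=\tfrac{2\lambda_4}{\lambda_2}$, which is what makes $a'=0$ and collapses each dilated channel's $\mathcal H_\infty$ norm to the clean closed form), certify $\|K'\|\le 1$ pointwise via Lemma~\ref{lemma:nonlinear_gain}, bound the two decoupled scalar channels of $\hat P'(\rho z)$, invoke the small-gain theorem and Lemma~\ref{lemma:CDC15_system_stbility_conversion}, and convert back through \eqref{equ:state_def} and Lemma~\ref{lemma:v*_lipschitz} exactly as in the paper. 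The only blemishes are bookkeeping-level: your formulas $A'=I-BM_3^{-1}M_2$, $u_k=M_3^{-1}(\sigma_k-M_2x_k)$ should read $A'=I-M_1^{-1}BM_2$ and $u_k=M_2\xi_k+M_3\sigma_k$ with $x_k=M_1\xi_k$ (harmless here since all blocks are scalar multiples of identity), and your ``threshold $a'+b'$'' should be $a'+c'b'$, which at $a'=0$ gives precisely the stated radicands.
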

\begin{proof}
Given parameters $\lambda_1, \lambda_2, \lambda_3, \lambda_4, \alpha$ and $\beta$ satisfying conditions in Lemma \ref{lemma:nonlinear_gain}, a real matrix $M$ specified in \eqref{eqn:linear_transformation_M} exists. We therefore construct the following $P'$ to accomplish the new system interconnection $[P',K']$:
\begin{equation*}
    \xi_{k+1} = 
    \mathrm{diag}\left(
        (1-\frac{3\lambda_1}{2\lambda_3}\alpha)I_m, (1-\frac{\lambda_2}{2\lambda_4}\beta) I_n\right)
 \xi_k - \mathrm{diag}\left(
        \alpha \sqrt{\frac{9\lambda_1^2}{4\lambda_3^2} - \frac{3a}{\lambda_3}}I_m, 
         \beta \sqrt{ \frac{\lambda_2^2}{4\lambda_4^2} - \frac{b}{\lambda_4} } I_n\right)
    \sigma_k.
\end{equation*}
Note that its transfer matrix is diagonal. We  compute each term on the diagonal as follows: $
    \hat P'_{1}(z)=\frac{\alpha\frac{3\lambda_1}{2\lambda_3}\sqrt{ 1 - \frac{4\lambda_3a}{3\lambda_1^2} }}{z+\frac{3\lambda_1}{2\lambda_3}\alpha - 1 }$ and $\hat P'_{2}(z) = \frac{ \beta\frac{\lambda_2}{2\lambda_4}\sqrt{1 - \frac{4\lambda_4b}{\lambda_2^2}} }{z+\frac{\lambda_2}{2\lambda_4}\beta-1},
$ where $a$ and $b$ are defined in the proof of Lemma~\ref{lemma:nonlinear_gain}.
Next we consider the 
interconnection modified by introducing operators $\rho_-:=\rho^{-k}, \rho_+:=\rho^{k}$ for given $\rho\in(0,1)$.
The operator $\rho_-\circ P \circ \rho_+$ has $z$-transform $\hat P'_{\rho}(z)=
\hat P'(\rho z)$, {whose} first diagonal term has $\mathcal{H}_\infty$ norm
given by
\begin{equation*}
    \begin{aligned}
    \Vert \hat P'_{\rho,1}(z) \Vert_\infty &= \frac{ \sqrt{1-\frac{4\lambda_3 a}{3\lambda_1^2}} }{\frac{2\lambda_3}{3\lambda_1\alpha}(\rho-1)+1} \quad \text{ if } 0<\alpha\leq\frac{2\lambda_3}{3\lambda_1}, \\
    \Vert \hat P'_{\rho,1}(z) \Vert_\infty &= \frac{ \sqrt{1-\frac{4\lambda_3 a}{3\lambda_1^2}} }{ \frac{2\lambda_3}{3\lambda_1\alpha}(1+\rho)-1 } \quad \text{ if }  \alpha\geq \frac{2\lambda_3}{3\lambda_1}.
    \end{aligned}
\end{equation*} 
The smallest value of $\rho$ that makes 
this norm {less} than $1$, if possible, is consistently achieved when 
$\alpha=\frac{2\lambda_3}{3\lambda_1}$. A similar observation occurs to $\hat P'_{\rho,2}(z)$ with $\beta=\frac{2\lambda_4}{\lambda_2}$. \par

Therefore, if $\lambda_1, \lambda_2, \lambda_3, \lambda_4$ satisfy $a>0$, $b>0$ and~\eqref{equ:lemma_3_4_assume_1},~\eqref{equ:lemma_3_4_assume_2}, and~\eqref{equ:lemma_3_4_assume_3}, then
any $\rho>
\max\{\sqrt{1-\frac{4\lambda_3a}{3\lambda_1^2}}, \sqrt{1-\frac{4\lambda_4b}{\lambda_2^2}}\}$  will lead to $\|\hat P'_{\rho}(z)\|<1$, which further implies the linear convergence of $x_k$ in~\eqref{equ:update_dynamics} with a rate {no larger than} $\max\{\sqrt{1-\frac{4\lambda_3a}{3\lambda_1^2}}, \sqrt{1-\frac{4\lambda_4b}{\lambda_2^2}}\}$ due to Lemma~\ref{lemma:CDC15_system_stbility_conversion}. To complete the proof, it suffices to establish sufficient condition under which $\lambda_1$ to $\lambda_4$ are feasible to the above set of five inequalities.

First, by dividing both sides of $a>0$ and~\eqref{equ:lemma_3_4_assume_1} with $\lambda_1$ and substituting $\alpha = \frac{2\lambda_3}{3\lambda_1}$ in the inequalities, we obtain 
$\frac{3\mu_f}{8} - 3(H_\omega^2+2\frac{H_v^2H^2}{\mu_g^2})\alpha >0$ and 
    $\frac{3\mu_f}{8} - 3(H_\omega^2+2\frac{H_v^2H^2}{\mu_g^2})\alpha < \frac{1}{2\alpha}$, 
which hold when $\alpha<\min\{\frac{\mu_f}{8(H_\omega^2+\frac{2H_v^2H^2}{\mu_g^2})}, \frac{1}{24\mu_f}\}$. 
Similarly, by assigning $\frac{\lambda_1}{\lambda_2}=\frac{\mu_g}{8}/\left( \frac{2H_v^2}{\mu_f}+6H_v^2\alpha \right)$, we have 
\begin{equation*}\label{equ:thm_3_5_require_2_1}
    \begin{aligned}
        \frac{b}{\lambda_2}=& \left( -\frac{2H_v^2}{\mu_f}\lambda_1 + \frac{3\mu_g}{8}\lambda_2 -4H_v^2\lambda_3 - 2L_g^2\lambda_4 \right) / \lambda_2\\
        = & -\left(\frac{2H_v^2}{\mu_f}+6H_v^2\alpha\right)\frac{\lambda_1}{\lambda_2} + \frac{3\mu_g}{8} - L_g^2\beta
        =  \frac{\mu_g}{4} - L_g^2 \beta.
    \end{aligned}
\end{equation*}
The inequalities $b>0$
and~\eqref{equ:lemma_3_4_assume_2} hold when $\beta < \min\{ \frac{\mu_g}{8L_g^2}, \frac{1}{4\mu_g} \}$. 
Lastly, to find conditions such that  \eqref{equ:lemma_3_4_assume_3} holds, we divide the right hand side of \eqref{equ:lemma_3_4_assume_3} with $\lambda_3$ and obtain
\begin{equation*}
        \frac{2\alpha^2H^2}{\beta^2\mu_g^3}\frac{\lambda_2}{\lambda_3}+\frac{2\alpha^2H^2}{\beta^2\mu_g^2}\frac{\lambda_4}{\lambda_3} = \left( \frac{2}{\beta\mu_g}+1 \right) \frac{2\alpha H^2}{3\beta\mu_g^2} \frac{\lambda_2}{\lambda_1} 
        \leq  \frac{9}{4\beta\mu_g} \frac{2\alpha H^2}{3\beta\mu_g^2} \frac{18H_v^2}{\mu_f\mu_g} = \frac{\alpha}{\beta^2} \frac{27H_v^2H^2}{\mu_g^4\mu_f},
\end{equation*}
where the inequality is due to $\beta < \frac{1}{4\mu_g}$, $\alpha<\frac{1}{24\mu_f}$, and $\frac{\lambda_1}{\lambda_2}=\frac{\mu_g}{8}/\left( \frac{2H_v^2}{\mu_f}+6H_v^2\alpha \right)$. When $\frac{\alpha}{\beta^2}< \frac{2\mu_f\mu_g^4}{81H_v^2H^2}$, \eqref{equ:lemma_3_4_assume_3} holds.\par
Therefore, we have $\omega_k-\omega^*$ and $v_k-v_*(\omega_k)$ converge to zero linearly with rate $\rho^k$. Furthermore, with Lipschitz continuity of $v_*(\cdot)$ proved in Lemma \ref{lemma:v*_lipschitz}, {we have $\|v_k-v_*(\omega^*)\|_2\leq \|v_k-v_*(\omega_k)\|_2 + \| v_*(\omega_k)-v_*(\omega^*) \|_2\leq \| v_k-v_*(\omega_k) \|_2+H_*\| \omega_k-\omega^* \|_2$ also converge linearly.}
 \end{proof}
According to Theorem~\ref{theorem:final_convergence}, linear convergence of~\eqref{equ:update_dynamics} under the SC-SC condition necessitates sufficiently small step sizes $\alpha, \beta$, along with a modest ratio of $\frac{\alpha}{\beta^2}$. An intuitive rationale for a small ratio of $\frac{\alpha}{\beta^2}$ may be that it guarantees a more effective convergence of the inner problem relative to the optimization at the outer layer. 
This setting may facilitate pinpointing a refined point
at the lower level for conducting gradient approximations in the upper-level gradient descent.

\section{Conclusion}
In this paper, we consider a single-loop algorithm for solving SC-SC bilevel optimization problem. Our main contribution is {establishing} a linear convergence rate which is primarily faster compared with existing sublinear rates. The proof technique is mainly motivated by control theory {previously} used to prove the convergence of single-level optimization {problems}. By constructing a set of parameters which ensure the product of gains of nonlinear control component and linear state-space system component is less than 1, {we} prove the linear convergence of the algorithm via the small-gain theorem and provide the convergence rate.

\bibliography{main}

\end{document}